\newtheorem{theorem}{Theorem}
\newtheorem{lemma}[theorem]{Lemma}
\theoremstyle{remark}
\def\hess{\mathop{\rm Hess}\nolimits}%Hessian of function
\def\tr{\mathop{\rm Tr}\nolimits}%Trace of matrix
\def\E{\mathop{\rm Ent}\nolimits}%entropy
\def\lg{\mathop{\rm log}\nolimits}%logarithmic
\def\d{\mathop{\rm div}\nolimits}%divergence
\def\R{\mathbb R}
\def\longto{\longrightarrow}
\numberwithin{equation}{section}
\begin{document}

%%%%%%%%%%%%%%%%%%%

\title{Entropy jumps for isotropic log-concave random vectors and spectral gap}
\author{Keith Ball \\
Institute of Mathematics, University of Warwick,\\
Coventry, CV4 7AL, UK\\
Email: kmb120205@googlemail.com
\and
Van Hoang Nguyen \\
Institut de Math\'ematiques de Jussieu, UPMC, \\
4 place Jussieu, 75252 Paris, France\\
Email: vanhoang@math.jussieu.fr }

\maketitle

%% Classification and key words; note that the 2010 classification is used:

\renewcommand{\thefootnote}{}

\footnote{2010 \emph{Mathematics Subject Classification}: 94A17.}

\footnote{\emph{Key words and phrases}: Entropy, Fisher information, isotropic constant, isotropic log-concave random vector, spectral gap.}

\renewcommand{\thefootnote}{\arabic{footnote}}
\setcounter{footnote}{0}

\begin{abstract}
We prove a quantitative dimension-free bound in the Shannon-Stam Entropy inequality for the convolution of two log-concave distributions in dimension $d$ in terms of the spectral gap of the density. The method relies on the analysis of the Fisher  Information production, which is the second derivative of the Entropy along the (normalized) Heat semi-group.
%(which is the second derivative of the Entropy along the renormalized heat semi-group). 
We also discuss consequences of our result in the study of the isotropic constant of log-concave distributions (slicing problem).
\end{abstract}

\section{Introduction}
Let $X$ be a random vector in $\R^d$ with density $f:\R^d\longrightarrow  [0,\infty)$, a relation denoted by $X\sim f$. Its entropy is defined to be
$$\E(X)=-\int_{\R^d}f\lg f$$
provided $\int_{\R^d}f\lg_+ f < \infty$. We then say that $X$ has finite entropy.

We shall say that a random vector $X$ on $\R^d$, or a probability density $f$, is 
 \emph{isotropic} if it is centered and has a  covariance matrix equal to the identity:
 $$\mathbb E [X] = \int_{\R^d} x \, f(x)\, dx = 0$$ 
and 
$$ \mathbb E [X_i X_j] = \int_{\R^d} x_i x_j \, f(x) \, dx = \delta_{i,j}, \quad i,j=1\ldots , d .$$
This normalization can be realized by an affine transformation. 
 
 Among random vectors with a given covariance matrix, the corresponding Gaussian has the largest entropy. 
 The gap between the entropy of a random vector $X$ and that of a Gaussian with same covariance matrix is a strong measure of how close $X$ is to being Gaussian. 
 For instance, if $X$ has  mean zero and is isotropic with density $f$, and if $G$ is a standard (normal) Gaussian vector with density $g$, then the Pinsker-Csisz\'ar-Kullback inequality (see \cite{P, C} or \cite{BL}) implies that
 $$\frac{1}{2}\bigl{(}\int_{\R^d}|f-g|\bigl{)}^2\leq \E(G)-\E(X).$$
 
The celebrated Shannon-Stam inequality (see \cite{SW, S}) says that if $X$ and $Y$ are independent identically distributed (\emph{iid} in short) random vectors, then the normalized sum $(X+Y)/\sqrt{2}$ has entropy at least as large as that of $X$ and $Y$:
\begin{equation*}\label{ST}
 \E\Big(\frac{X+Y}{\sqrt{2}}\Big)\ge\E(X).
 \end{equation*}
Moreover,  this inequality is strict if $X$ is not itself a Gaussian random vector. A challenging problem is to quantify this phenomenon, i.e. for fixed covariance matrix (say the identity), bound $ \E(\frac{X+Y}{\sqrt{2}})-\E(X)$ from below by a non-negative (and positive outside zero) function of $\E(G)-\E(X)$. The first result in this direction was obtained by Carlen and Soffer~\cite{CS} who proved, under technical assumptions, a non-explicit bound based on a compactness argument.  Extra assumptions cannot be avoided if one aims at universal entropic estimates: it is  easy  to construct (by taking a carefully chosen double bumped Gaussian) a random variable $X$ for which, the convolution does not greatly modify the entropy, $\E(\frac{X+Y}{\sqrt{2}}) \simeq \E(X)$, but with $\E(X) \ll \E(G)$.
A surprisingly neat result holds in the case where $X\in \R$ is a random variable with variance $1$ and with a density $f$ that satisfies a Poincar\'e (or spectral gap) inequality in the sense that for some positive $c$ and
any smooth function $s$ with $\int_{\R}fs=0$ $$  c\int_{\R}f \, s^2\leq \int_{\R}f\, \big(s'\big)^2.$$
Indeed, we then have
 \begin{equation}\label{bbn}
 \E\Big(\frac{X+Y}{\sqrt{2}}\Big)-\E(X)\geq \frac{c}{2+2c}\big(\E(G)-\E(X)\big)
 \end{equation}
for $Y$  an independent copy of $X$. This result was proved by Ball, Barthe and Naor in~\cite{BBN} using a variational formula for the Fisher information of a marginal density and spectral analysis to get an information jump in the presence of a spectral gap (see \cite[Theorem 2]{BBN}) and then  using a relation between the Fisher information and entropy provided by the adjoint Ornstein-Uhlenbeck semigroup. In the paper~\cite{BJ}, Barron and Johnson obtained  a result similar to~\eqref{bbn} under the same hypothesis, but their method is different (at least in details) to that of \cite{BBN}. In their paper, Barron and Johnson give an upper bound for the $L^2$ distance of the score function of $(X+Y)/\sqrt{2}$ to the space of additive functions of $X$ and $Y$, using $L^2$-orthogonal projections; they eventually use the Poincar\'e inequality to get the result (see \cite[Proposition 2.1 and 3.1]{BJ} for details).
 
 The aim of the present paper is to investigate similar results for random vectors, and incidentally to give a new approach to~\eqref{bbn}. A random vector $X\in \R^d$ with density $f$ is said to satisfy a Poincar\'e or spectral gap inequality with constant $c>0$ if for any smooth function $s$ with $\int_{\R^d}fs=0$
 \begin{equation}\label{spectralgap}
  c\int_{\R^d}f s^2\leq \int_{\R^d}f\, |\nabla s|^2.
 \end{equation}
That largest constant $c$ in this inequality is indeed the spectral gap for the operator $-L$ on $L^2(f)$ where $Ls:= \Delta s - \nabla (\log(f)\cdot \nabla s$ for suitable functions; the Poincar\'e constant ${\rm c_p}(f)$  refers rather to the inverse of the spectral gap, i.e. to the smallest constant in the inequality 
  \begin{equation}\label{spectralgap2}
  \int_{\R^d}f s^2\leq  {\rm c_p}(f)\int_{\R^d}f\, |\nabla s|^2.
 \end{equation}
 
 A simplistic adaptation of the argument used~\cite{BBN} in higher dimensions leads to an inequality of the form~\eqref{bbn} for random vectors  but with an extra dependance in $d$, the dimension. In the present paper we prove the result without the extra dependence for log-concave random vectors, i.e. those having a density $f$ such that $-\log(f)$ is convex on $\R^n$. It is well known that such random vectors have finite entropy and have a positive spectral gap (see below). This family is central in many high-dimensional problems.

\begin{theorem}\label{main}
Let $X$ be an isotropic  log-concave random vector in $\R^d$. Assume its density $f$ satisfies a Poincar\'e inequality~\eqref{spectralgap} with constant $c>0$.
Then, if $Y$ is an independent copy of $X$, we have
\begin{equation}
\label{vectorjump}
\E\left(\frac{X+Y}{\sqrt{2}}\right)-\E(X)\geq \frac{c}{4(1+c)}(\E(G)-\E(X)).
\end{equation}
\end{theorem}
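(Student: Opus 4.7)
The plan is to linearize the entropy deficit along an Ornstein-Uhlenbeck interpolation, reduce the theorem to a pointwise Fisher information inequality, and establish that inequality using the spectral gap hypothesis together with log-concavity.

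\medskip
\textbf{Step 1 (Entropy-Fisher linearization).} For an isotropic random vector $X$, set
$$X^{(t)} = e^{-t} X + \sqrt{1 - e^{-2t}}\, G, \qquad t \ge 0,$$
where $G$ is a standard Gaussian independent of $X$. Then $X^{(t)}$ is isotropic for every $t$, $X^{(0)} = X$, and $X^{(t)}$ converges in law to $G$ as $t\to\infty$. Combining de Bruijn's identity with the variance-normalizing rescaling (equivalently, writing the relative entropy $\E(G)-\E(X^{(t)})$ as an integral of relative Fisher information along the Ornstein-Uhlenbeck semigroup) yields
$$\frac{d}{dt}\,\E(X^{(t)}) = I(X^{(t)}) - d, \qquad \E(G) - \E(X) = \int_0^\infty \bigl(I(X^{(t)}) - d\bigr)\, dt.$$
The same identity applied to $Z = (X + Y)/\sqrt 2$, which is itself isotropic log-concave, gives after subtraction
$$\E(Z) - \E(X) = \int_0^\infty \bigl(I(X^{(t)}) - I(Z^{(t)})\bigr)\, dt.$$

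\medskip
\textbf{Step 2 (Propagation of the hypotheses).} Each $X^{(t)}$ and $Z^{(t)}$ is a convolution of a log-concave law with a rescaled Gaussian, hence remains log-concave. For the Poincar\'e constant, tensorize on the pair $(X, G)\in\R^{2d}$, which has spectral gap $\min(c,1)=c$ (any isotropic measure has spectral gap at most $1$, as seen by testing \eqref{spectralgap} with $s(x)=x_1$). Composing with the linear map $(x,g)\mapsto e^{-t}x+\sqrt{1-e^{-2t}}\,g$, whose operator norm is one, shows that $X^{(t)}$ and similarly $Z^{(t)}$ satisfy \eqref{spectralgap} with the same constant $c$ for every $t\ge 0$.

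\medskip
\textbf{Step 3 (Fisher information jump).} The key new pointwise inequality is: for every isotropic log-concave $X$ in $\R^d$ satisfying \eqref{spectralgap} with constant $c$, and $Y$ an independent copy,
$$I(X) - I\!\left(\frac{X+Y}{\sqrt 2}\right) \,\ge\, \frac{c}{4(1+c)}\,(I(X) - d).$$
Writing the score of $Z$ as a conditional expectation $\nabla\log h_Z(z)=\tfrac{1}{\sqrt 2}\e[\nabla\log f(X)+\nabla\log f(Y)\mid Z=z]$ recasts $I(X)-I(Z)$ as an $L^2$ projection defect,
$$2\bigl(I(X)-I(Z)\bigr) = \e\bigl|\,S-\e[S\mid Z]\,\bigr|^2, \qquad S := \nabla\log f(X)+\nabla\log f(Y).$$
This defect is bounded from below by applying the Poincar\'e inequality in $\R^{2d}$ (for $f\otimes f$) to a carefully chosen test vector field, the natural candidate being built from the centered scores $x+\nabla\log f(x)$ and $y+\nabla\log f(y)$ (mean zero under $f$, and joint $L^2(f\otimes f)$ norm equal to $2(I(X)-d)$). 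Log-concavity enters pointwise via $-\hess\log f\ge 0$ to control the Hessian term that appears in the ensuing $\Gamma_2$-type calculation, which is exactly what keeps the final constant dimension-free.

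\medskip
\textbf{Step 4 (Conclusion).} Applying the Fisher information jump of Step 3 at each time $t$ with the uniform Poincar\'e constant $c$ from Step 2, and substituting into the integral of Step 1,
$$\E(Z) - \E(X) \,\ge\, \frac{c}{4(1+c)}\int_0^\infty \bigl(I(X^{(t)})-d\bigr)\, dt \,=\, \frac{c}{4(1+c)}\,\bigl(\E(G)-\E(X)\bigr),$$
which is the claimed inequality.

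\medskip
\textbf{Main obstacle.} Step 3 is the real content of the theorem. A direct higher-dimensional transcription of the one-dimensional Ball-Barthe-Naor variational argument (which relied on the spectral decomposition of the one-dimensional Ornstein-Uhlenbeck generator) incurs a dimension-dependent loss that cannot be absorbed by spectral considerations alone. The purpose of the log-concavity hypothesis is precisely to compensate for this loss through the pointwise Hessian bound $-\hess\log f\ge 0$, in the spirit of Brascamp-Lieb. Selecting the correct test vector field and performing the $\Gamma_2$-style computation that extracts exactly the constant $\tfrac{c}{4(1+c)}$ --- a factor of $2$ worse than the sharp $1$-dimensional constant, which is the natural toll for coupling two independent copies --- is the delicate technical core of the proof.
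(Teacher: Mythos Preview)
Your Steps 1, 2, and 4 are fine, but Step 3 is not a proof---it is a wish. You correctly write the Fisher gap as a projection defect $2(I(X)-I(Z))=\e\,|S-\e[S\mid Z]|^2$, but then you only assert that a ``carefully chosen test vector field'' together with $-\hess\log f\ge 0$ yields the pointwise bound $I(X)-I(Z)\ge \frac{c}{4(1+c)}(I(X)-d)$. No test field is exhibited, no computation is carried out, and you yourself label this the ``delicate technical core'' and ``main obstacle.'' The authors state explicitly in the introduction that the direct Fisher-information route of \cite{BBN} \emph{does} pick up a dimensional factor in $\R^d$, and they never claim, let alone prove, a pointwise Fisher jump; nothing in the paper supports the inequality you need in Step~3.

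The paper's actual mechanism is quite different from your sketch and is worth contrasting. Rather than comparing $J(t)$ and $J_2(t)$ directly, they differentiate once more and work with the \emph{information production} $K(t)=\tr\int f_t(\hess\varphi_t)^2$, for which Lemma~\ref{lem:dtJ} gives $K(t)=-\tfrac12 e^{2t}\partial_t(e^{-2t}J(t))$. Log-concavity is used \emph{only} in Lemma~\ref{margentprod}/Theorem~\ref{K2} to prove the marginal inequality $K_2\le\tfrac12(K+M)$ with $M=\tr[(\int f\,\hess\varphi)^2]$; this is an operator-Cauchy--Schwarz argument that needs $\hess\psi\ge 0$ and has no analogue at the Fisher-information level. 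The Poincar\'e inequality is then applied not to the scores but to the \emph{centred gradients of the score}, $s_i=\partial_i\varphi_t-\sum_j x_j\int(\partial_{ij}\varphi_t)f_t$, yielding $K-M\ge\frac{c}{1+c}(K-J)$. Integrating $\partial_t\big(e^{-2t}(J_2-J)\big)\ge\frac{c}{1+c}e^{-2t}(K-J)$ in time, together with the weighted entropy bound of Lemma~\ref{lem:intJ}, produces the constant $\frac{c}{4(1+c)}$ \emph{after} two integrations in $t$---not pointwise. If you want to salvage your outline, Step~3 has to be replaced by this second-derivative argument; the pointwise Fisher inequality you postulate is neither proved here nor, as far as the paper indicates, known to hold with a dimension-free constant.
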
 
Since $c\le 1$ when $f$ is isotropic,  the constant $\frac{c}{4(1+c)}$ may be replaced by $\frac{c}{8}$. The log-concavity assumption will be crucial on the proof of inequality~\eqref{marg1bis} below, we do not know whether it holds without this assumption.

As mentioned above, we need to develop a method different to the one of~\cite{BBN}. Our alternative approach relies on the study of \emph{second} derivatives of the entropy  along the heat semi-group (or rather along the Ornstein-Uhlenbeck semi-group). Note that it also gives an alternative proof of the one dimensional case~\eqref{bbn}, up to a numerical (nonessential) constant.

Let us make some general comments on  log-concave random vectors. If $X$ and $Y$ are \emph{i.i.d} random vectors with density $f$, the normalized sum $\frac{X+Y}{\sqrt{2}}$ has density
\begin{equation}\label{convol}
u\longto \int_{\R^d}f(\frac{u+v}{\sqrt{2}})f(\frac{u-v}{\sqrt{2}})dv
\end{equation}
which is a marginal of the joint density on $\R^{2d}$ of the pair $(X,Y)$. It is a consequence of the Brunn-Minkowski inequality (in its functional form due to Pr\'ekopa~\cite{Prekopa}) that log-concave random vectors have log-concave marginals and hence that if $X$ and $Y$ are log-concave, then so is $\frac{X+Y}{\sqrt{2}}$. It is also well known that a log-concave density verifies a Poincar\'e inequality for some constant $c>0$. It was proven by Kannan, Lovasz and Simonovitz~\cite{KLS} and independently by Bobkov~\cite{Bo} that if $X$ is an isotropic log-concave random vector, then it satifies a Poincar\'e inequality~\eqref{spectralgap} with consant $\frac{C}{n}$ for some numerical constant $C$ (thus independent of $n$ and $X$). Actually, slightly better depedence in $n$ is known.The Kannan-Lovasz-Simonovitz (KLS) open conjecture states that there exists a universal constant $c>0$ such that for every $n$, every every isotropic log-concave random vector in $\R^n$ verifies a Poincar\'e inequality with constant $c$. It was noticed some time ago by the first named author, as part of a general program of understanding information theory (and entropy) in the context of convexity in high dimensions,  that using~\eqref{vectorjump} we can prove that the KLS conjecture implies the celebrated hyperplane (or slicing) conjecture.  We shall return to this in the last section. 

The organization of the paper is as follows. In the next section, we recall some standard facts about the Ornstein-Uhlenbeck semi-group and about the first derivative of entropy (Fisher information) and the second derivative (information production). Next we investigate how information production behaves under convolution and state a general inequality relating information production of a random vector to the information of a marginal. The subsequent section contains the proof of Theorem~\ref{main}. The final section discusses the connections between entropy jump and the isotropic constant of  log-concave distribution.

\section{Classical results on Ornstein-Uhlenbeck semi-group and Fisher information}
\label{sect:OU}

For any random vector $X$ with smooth enough density $f$ -we require that $\sqrt{f} \in H^1(\R^n)$, but later the density will have even smoother behavior- its Fisher information is defined by
$$J(X):=J(f):=\int_{\R^d}\frac{|\nabla f|^2}{f}.$$

Among random vectors with  given covariance matrix, the Gaussian has the smallest Fisher information, as shown by the following straightforward computation:
if $X$ is an isotropic mean-zero  log-concave random with density $f$, and $G$ is a standard Gaussian with density $g: x\to (1/\sqrt{2\pi})^de^{-|x|^2/2}$, then
$$J(G)=\int_{\R^d}\frac{|\nabla g|^2}{g}=\frac{1}{\sqrt{2\pi}^d}\int_{\R^d}\sum_{i=1}^dx_i^2e^{-|x|^2/2}dx=d.$$
and, by integration by parts,
$$
0\leq\int_{\R^d}\Bigl{|}\frac{\nabla f}{f}+x\Bigl{|}^2 f(x)dx=J(X)-2\int_{\R^d} \textrm{div}(x) f  + d =  J(X)-J(G).
$$

The Fisher information appears as the derivative of the entropy  along the Ornstein-Uhlenbeck semi-group, a property central in the works by Bakry and  \'Emery \cite{BE} and also in Barron's work \cite{B} on the convergence of entropy in the central limit theorem. The Ornstein-Uhlenbeck semi-group can be constructed in several (equivalent) ways and we choose the following. If $X$ is a random vector with density $f$ and $G$ is a standard Gaussian, independent of $X$, we consider the random vector  $X_t=e^{-t}X+\sqrt{1-e^{-2t}}G$, whose law is the Ornstein-Uhlenbeck evolute at time $t$ of the law of $X$. More precisely, the density $f_t$ of $X_t$ is the solution of the  Fokker-Planck equation with generator $L$ :
 \begin{equation}\label{f-p}
 f_0 = f ; \qquad \frac{\partial}{\partial t} f_t(x) =L(f_t)(x) := \Delta_x f_t+\d_x(xf_t), 
 \end{equation}
for all $t>0$ and $x\in \R^n$.

It is indeed well known, that starting with a (continuous, say) density $f$, the density $f_t$ is, for $t>0$, strictly positive, $C^\infty$-smooth on $\R^n$ and that $f_t$ and its derivatives decay  exponentially fast to zero at $\infty$ ; in particular $f_t$ has a finite Fisher information and it is readily checked that
\begin{align*}
\frac{\partial}{\partial_t}\E(f_t)&=-\int_{\R^d}(Lf_t)\log(f_t) = -\int_{\R^d} f_t \Delta \log(f_t) -d\int_{\R^d}f_t\\
&=J(f_t)-d.
\end{align*}
Hence, we have the classical expression of the entropy gap as the integral of the information gap
\begin{equation}\label{EFI}
\E(G)-\E(X)=\int_0^{\infty}(J(f_t)-d)dt.
\end{equation}
We refer to Carlen and Soffer \cite{CS} for details and precise justifications. 

Let us mention for further reference some other nice stability properties of the Ornstein-Uhlenbeck semi-group.
It can only improve the spectral gap: if $X\sim f$ is isotropic and satisfies a Poincar\'e inequality~\eqref{spectralgap} with constant $c>0$ (actually, $c\in ]0,1]$), then $f_t$ satisfies a Poincar\'e inequality with the same constant $c>0$. This follows easily from Fubini's theorem, H\"older's inequality and the fact that the Gaussian density has spectral gap of size $1$ (see~\cite{BBN}). 
Next,  it is again a consequence of Pr\'ekopa's theorem that if $X$ (or $f$) is log-concave, then so is $X_t$ (or $f_t$). Finally, it is also classical that the operation of taking marginals commutes with the Ornstein-Uhlenbeck semi-group in the following sense. Let $X$ and $Y$ be two independent random vectors and $X_t=e^{-t}X+\sqrt{1-e^{-2t}}G_1$ and $Y_t=e^{-t}Y+\sqrt{1-e^{-2t}}G_2$ their independent evolutes along the Ornstein-Uhlenbeck semi-group, where each $G_i$ $(i=1,2$) is a  standard Gaussian vector independent of all the other vectors. Then
\begin{equation}\label{OUmarg}
\frac{X_t+Y_t}{\sqrt2 }= e^{-t}\, \frac{X+Y}{\sqrt2} \;+\; \sqrt{1-e^{-2t}}\, G
\end{equation}
where $G=\frac{G_1+G_2}{\sqrt2}$ is a standard Gaussian vector.

%QUESTION : is the following true  ?

%The previous discussion and remarks (similar to the ones in~\cite{BBN}) show that Theorem~\ref{main} is a consequence of the following result:
%\begin{theorem}\label{main2}
%Let $X$ be an isotropic  log-concave random vector in $\R^d$ with finite Fisher information. Assume its density $f$ satisfies a Poincar\'e inequality~\eqref{spectralgap} with constant $c>0$.
%Then, if  $Y$ an independent copy of $X$, we have
%\begin{equation}
%\label{infojump}
%J(X)-J\Big(\frac{X+Y}{\sqrt{2}}\Big)\ge \frac{c}{8(1+c)}(J(X)-J(G)).
%\end{equation}
%\end{theorem} 
%The previous theorem will be prove by taking a further derivative along the Ornstein-Uhlenbeck semi-group.

Throughout the rest of the section, $X$ will be an isotropic  log-concave random vector with mean zero,  and density $f$. The density $f_t$ of $X_t=e^{-t}X+\sqrt{1-e^{-2t}}G$, the evolute of $X$ along the Ornstein-Uhlenbeck semi-group satisfies~\eqref{f-p} and takes the form $f_t=e^{-\varphi_t}$ with  $\varphi_t:=\log(f_t)$ convex on $\R^d$. Its  Fisher information will be denoted by
\begin{equation}\label{defJt}
J(t):=J(f_t)=\int_{\R^d}\frac{|\nabla f_t|^2}{f_t}=-\int_{\R^d} f_t\Delta\lg f_t=\tr\int_{\R^d}f_t\hess\varphi_t.
\end{equation}
We will  work with the derivative $\partial_t J(t)$ of the Fisher information along the Ornstein-Uhlenbeck semi-group. The following result is classical in the context of Bakry-Emery's $\Gamma_2$ calculus, although it is not usually written in this form which for us will prove useful. We include a proof for completeness.

\begin{lemma}~\label{lem:dtJ}
With the previous notation we have
\begin{equation}\label{derJ}
\partial_t J(t)=2J(t)-2\tr\int_{\R^d} f_t\, \big(\hess\varphi_t\big)^2.
\end{equation}
\end{lemma}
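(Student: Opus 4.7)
The plan is to reduce the evolution of $J(f_t)$ along the Ornstein--Uhlenbeck semigroup to the corresponding formula along the heat equation (de Bruijn's identity for the Fisher information), via a space--time rescaling, with the extra $+2J(t)$ term arising from the rescaling itself.

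First, I would write the OU evolute as a rescaled heat convolution. Setting $Y_s:=X+\sqrt{2s}\,G$ and $s(t):=(e^{2t}-1)/2$, one checks directly that $X_t=e^{-t}Y_{s(t)}$; hence, if $v_s$ denotes the density of $Y_s$, then $v_s$ satisfies the heat equation $\partial_s v_s=\Delta v_s$ and
\begin{equation*}
f_t(x) = e^{dt}\, v_{s(t)}(e^t x),\qquad s'(t)=e^{2t}.
\end{equation*}
A direct change of variables $y=e^t x$, together with the identities $\nabla\log f_t(x)=e^t(\nabla\log v_s)(e^tx)$ and $\hess\log f_t(x)=e^{2t}(\hess\log v_s)(e^tx)$, then yields the two homogeneity relations
\begin{equation*}
J(f_t)=e^{2t}\,J(v_{s(t)}),\qquad \int_{\R^d} f_t\,|\hess\log f_t|^2\,dx = e^{4t}\int_{\R^d} v_{s(t)}\,|\hess\log v_{s(t)}|^2\,dy.
\end{equation*}

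The second ingredient is the classical de Bruijn-type identity along the heat flow,
\begin{equation*}
\partial_s J(v_s) = -2\int_{\R^d} v_s\,|\hess\log v_s|^2\,dy,
\end{equation*}
which follows from integration by parts combined with the Bochner formula $\Delta|\nabla u|^2 = 2|\hess u|^2 + 2\nabla u\cdot\nabla\Delta u$ applied to $u=\log v_s$. Differentiating $J(f_t)=e^{2t}J(v_{s(t)})$ in $t$ and substituting, the product rule produces one term $2\cdot e^{2t}J(v_s)=2J(t)$ and a second term $e^{2t}\cdot s'(t)\cdot\partial_s J(v_s) = -2e^{4t}\int v_s|\hess\log v_s|^2\,dy = -2\int f_t|\hess\log f_t|^2\,dx$. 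Since $f_t=e^{-\varphi_t}$ gives $\hess\log f_t=-\hess\varphi_t$ and hence $|\hess\log f_t|^2=\tr(\hess\varphi_t)^2$, this is exactly \eqref{derJ}.

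The main obstacle is technical rather than conceptual: justifying the differentiation under the integral and the integrations by parts used in the heat-flow identity. This is handled by the fact, recalled above, that for $t>0$ the density $f_t$ is $C^\infty$ and together with all its derivatives decays exponentially at infinity, so no boundary term arises and dominated convergence applies. An entirely direct alternative is to differentiate $J(t)=\int f_t|\nabla\varphi_t|^2$ using the Fokker--Planck equation and the explicit expression $\partial_t\varphi_t=\Delta\varphi_t-|\nabla\varphi_t|^2+x\cdot\nabla\varphi_t-d$; after integration by parts and Bochner, the drift terms involving $x\cdot\nabla$ are the ones that combine to produce precisely the $+2J(t)$ contribution.
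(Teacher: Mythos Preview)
Your argument is correct, but it differs from the paper's. The paper differentiates $J(t)=\int|\nabla f_t|^2/f_t$ directly using the Fokker--Planck equation $\partial_t f_t=\Delta f_t+\d(xf_t)$, splits the result into two pieces $A$ and $B$, and integrates each by parts; the drift terms produce mutually cancelling cross terms and what survives is $2J(t)-2\tr\int f_t(\hess\varphi_t)^2$. You instead factor the Ornstein--Uhlenbeck evolution as a heat evolution composed with a dilation, import the classical heat--flow identity $\partial_s J(v_s)=-2\int v_s|\hess\log v_s|^2$, and read off the extra $+2J(t)$ from the scaling law $J(f_t)=e^{2t}J(v_{s(t)})$.

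The payoff of your route is conceptual: it makes transparent that the term $2J(t)$ is purely a dilation artefact, with the ``hard'' part being the standard heat de Bruijn identity. The paper's route is more self-contained, since it does not invoke the heat identity as a black box; note that proving $\partial_s J(v_s)=-2\int v_s|\hess\log v_s|^2$ requires essentially the same integrations by parts that the paper performs anyway, so the net amount of computation is comparable. Your final paragraph in fact sketches the paper's direct method as an alternative.
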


\begin{proof}
Denoting $\partial_j$ the partial derivative (in space) with respect to $x_j$ we have
\begin{align*}
\partial_t J(t)&=\sum_{i=1}^d\int_{\R^d}2\frac{\partial_if_t}{f_t}\partial_i\partial_tf_t-\int_{\R^d}\big(\frac{\partial_if_t}{f_t}\big)^2\partial_tf_t\\
&=\sum_{i=1}^d\int_{\R^d}-2\partial_i\varphi_t\partial_i\Big[\sum_{j=1}^d\partial_j\big((-\partial_j\varphi_t+x_j) f_t\big) \Big]\\
&\ \ \ \ -\int_{\R^d}\sum_{j=1}^d\partial_j\big((-\partial_j\varphi_t+x_j) f_t \big) (\partial_i\varphi_t)^2.
\end{align*}
where we used that $f_t$ follows~\eqref{f-p} and $Lg = \sum_j \partial_j \big(\partial_j g + x_j g \big)$.
Let $A$ and $B$ be the first and the second terms in the above sum, respectively. Then, we have, by integration by parts and~\eqref{defJt},
\begin{align*}
A&=2\sum_{i,j=1}^d\int_{\R^d}(\partial_{ij}\varphi_t)\partial_i\big((-\partial_j\varphi_t+x_j)f_t\big)\\
%&=2\sum_{i,j=1}^d\int_{\R^d}\partial_{ij}\varphi_tf_t[-\partial_i\varphi_t(-\partial_j\varphi_t+x_j)-\partial_{ij}\varphi_t+\delta_{ij}]\\
&=2J(t)-2\tr\int_{\R^d}f_t\, \big(\hess\varphi_t\big)^2+2\sum_{i,j=1}^d\int_{\R^d}f_t\, \big(\partial_{ij}\varphi_t\big)\big(\partial_i\varphi_t\big)\big(\partial_j\varphi_t-x_j\big)
\end{align*}
and
\begin{align*}
B&=-\sum_{i=1}^d\int_{\R^d}\sum_{j=1}^d\partial_j(f_t(-\partial_j\varphi_t+x_j)) (\partial_i\varphi_t)^2\\
&=2\sum_{i,j=1}^d\int_{\R^d}f_t(-\partial_j\varphi_t+x_j)(\partial_i\varphi_t)(\partial_{ij}\varphi_t)\\
&=-2\sum_{i,j=1}^d\int_{\R^d}f_t (\partial_{ij}\varphi_t) (\partial_i\varphi_t) (\partial_j\varphi_t-x_j).
\end{align*}
Taking the sum of $A$ and $B$, one gets the result of this lemma.
\end{proof}

Note that the formula in the previous lemma can be rewritten in the following equivalent form, which is more standard:
\begin{eqnarray} 
\partial_t \big(J(t) - d \big) &=& -2 \big(J(t)-d\big) -2\tr\int_{\R^d} f_t\, \big(\hess\varphi_t- \textrm{Id}\big)^2\nonumber\\
&\le&  -2 \big(J(t)-d\big). \label{Gamma2}
\end{eqnarray}

The next lemma will allow us to control the tails of the entropy production.

\begin{lemma}\label{lem:intJ}
With the previous notation we have
$$\E(G)-\E(X)\leq 2\int_0^{\infty}e^{-2t}(J(t)-d)dt.$$
\end{lemma}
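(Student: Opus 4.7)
The plan is to view \eqref{Gamma2} as a first-order linear ODE for $u(t):=J(t)-d\ge 0$ and then compare the two weighted integrals of $u$ explicitly. Introduce the nonnegative remainder
$$R(t):=\tr\int_{\R^d} f_t\, \bigl(\hess\varphi_t-\textrm{Id}\bigr)^2\ge 0,$$
so that \eqref{Gamma2} reads $u'(t)+2u(t)=-2R(t)$, or equivalently
$$\partial_t\bigl(e^{2t}u(t)\bigr)=-2e^{2t}R(t).$$
In particular $h(t):=e^{2t}u(t)$ is nonincreasing and nonnegative. Integrating this last identity and using $h(t)\ge 0$ at infinity yields $\int_0^\infty e^{2s}R(s)\,ds\le u(0)/2<\infty$, which secures integrability of $R$ and of $e^{-2s}R(s)$. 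By \eqref{EFI} the left-hand side of the claim is $\E(G)-\E(X)=\int_0^\infty u(t)\,dt$.

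Next I would use the explicit solution $h(t)=u(0)-2\int_0^t e^{2s}R(s)\,ds$, multiply successively by $e^{-2t}$ and $e^{-4t}$, integrate in $t$, and apply Fubini to the iterated integrals. A short calculation yields the closed expressions
\begin{align*}
\int_0^\infty u(t)\,dt &=\frac{u(0)}{2}-\int_0^\infty R(s)\,ds,\\
2\int_0^\infty e^{-2t}u(t)\,dt &=\frac{u(0)}{2}-\int_0^\infty e^{-2s}R(s)\,ds.
\end{align*}
Subtracting and using $1-e^{-2s}\ge 0$ then gives
$$2\int_0^\infty e^{-2t}u(t)\,dt-\int_0^\infty u(t)\,dt=\int_0^\infty \bigl(1-e^{-2s}\bigr)R(s)\,ds\ge 0,$$
which is exactly the asserted inequality.

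There is no substantial obstacle here: the $\Gamma_2$-type monotonicity of $h$ provided by Lemma \ref{lem:dtJ} is precisely the input needed, and the sign $1-e^{-2s}\ge 0$ is what makes the weighted comparison work in the right direction. The only care required concerns integrability, i.e. the finiteness of $u(0)=J(X)-d$ (automatic since $X$ is isotropic and log-concave) and the vanishing of boundary terms at $\infty$, both of which follow from the smoothing and decay properties of the Ornstein--Uhlenbeck semi-group recalled in Section \ref{sect:OU}. In effect the lemma is the specialization to $h(t)=e^{2t}(J(t)-d)$ of the elementary fact that $\int_0^\infty e^{-2t}h(t)\,dt\le 2\int_0^\infty e^{-4t}h(t)\,dt$ for any nonincreasing nonnegative $h$ on $[0,\infty)$.
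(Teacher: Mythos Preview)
Your argument is correct and rests on the same key input as the paper's proof: the $\Gamma_2$ inequality~\eqref{Gamma2}, equivalently the monotonicity of $e^{2t}(J(t)-d)$. The paper repackages this as the Gaussian log-Sobolev inequality $J(t)-d\ge 2(\E(G)-\E(X_t))$ and then integrates by parts against $e^{-2t}$, whereas you keep the nonnegative remainder $R(t)$ and compare the two weighted integrals directly---these are two organizations of the same computation.
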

\begin{proof}
%Integration of inequality~\eqref{Gamma2} leads to the following classical  consequence of the Gaussian Log-Sobolev inequality :
%$$\E(G)-\E(X_t)\leq e^{-2t}(\E(G)-\E(X)), \text{ }\forall t >0,$$
%or equivalently,
%\begin{equation}\label{E2}
%\E(X_t)-\E(X)\geq (1-e^{-2t})(\E(G)-\E(X)),\text{ }\forall t > 0.
%\end{equation}
%Moreover, for all $t_0 > 0$ we have
%\begin{equation}\label{E1}
%\int_0^{\infty}e^{-2t}(J(t)-d)dt\geq e^{-2t_0}\int_0^{t_0}(J(t)-d)=e^{-2t_0}(\E(X_{t_0})-\E(X))
%\end{equation}
%It follows from~\eqref{E1} and ~\eqref{E2} that
%$$\int_0^{\infty}e^{-2t}(J(t)-d)dt\geq e^{-2t_0}(1-e^{-2t_0})(\E(G)-\E(X)),$$ 
%for every $t_0 > 0$. Take $t_0$ such that $e^{-2t_0}=1/2$, one obtains
%$$\E(G)-\E(X)\leq 4\int_0^{\infty}e^{-2t}(J(t)-d)dt.$$
Integration of inequality ~\eqref{Gamma2} leads to the following classical Gaussian Log-Sobolev inequality
$$J(t)-d \geq 2(\E(G)-\E(X_t))\text{ }\forall t > 0.$$
By integration by parts, we get
$$\int_0^{\infty}e^{-2t}(J(t)-d)dt\geq \E(G)-\E(X)-\int_0^{\infty}e^{-2t}(J(t)-d)dt,$$
or equivalentely
$$\E(G)-\E(X)\leq 2\int_0^{\infty}e^{-2t}(J(t)-d)dt.$$
\end{proof}

\section{A result for the information production of marginals}

As we saw in ~\eqref{derJ}, the information production $ \partial_t J(t)$ along the Ornstein-Uhlenbeck semi-group is given by quantities of the form
$$\tr\int \big(\hess\log f\big)^2 f  .$$
For our argument, we need to analyze how such quantities can be estimated for marginal densities. Assume $Z$ is a random vector with density $\omega:\R^N\to \R_+$ ($N\ge 1$) and consider the projection $P_E Z$ of $Z$ onto a subspace $E\subset \R^N$. It has a density on $E\simeq \R^{\textrm{dim}(E)}$ which we denote by $h$. A useful observation due to Carlen~\cite{Ca} for Fisher information is that
$$J(h) \le \int \frac{|P_E\nabla \omega|^2}{\omega}.$$
The next result provides an analogue for  information production. However, we are able to establish it only in the case of log-concave densities: here is where the restriction in  our main theorem comes from. Using it, we can then state the central inequality that will be used in the proof of the main theorem.

\begin{lemma}\label{margentprod}
Let $N\ge 1$ and  $\omega=e^{-\phi}:\R^N \to \R_+$ be a smooth positive function. Given a subspace $E\subset \R^N$ define the marginal function on $E$ by
$$\forall x \in E, \qquad h(x) := e^{-\psi(x)} := \int_{E^\perp} \omega(x+y) \, dy=  \int_{E^\perp}e^{-\phi(x+y)} \, dy.$$
Denote by $P_E$ the orthogonal projection onto $E$.
Then, for every $x\in E$ we have,
$$h(x) \hess \psi(x) \le
 \int_{E^\perp}  \omega(x+y)\, P_E \hess \phi (x+y) P_E  \,dy 
$$
 in the operator sense (for symmetric operators on $E$) and if $\hess \psi(x) \ge 0$ , then
\begin{equation}\label{marg1}
\tr\left[ \big(\hess \psi(x)\big)^2  h(x)  \right] \le\int_{E^\perp}   \tr\big[\big(P_E \hess \phi (x+y) P_E \big)^2\big]  \omega(x+y) \, dy.
\end{equation}
Therefore, if $\hess\psi\ge 0$ we have 
\begin{equation}\label{marg1bis}
 \int_E  \tr\left[\big(\hess \log h\big)^2\right]  h \le \int_{\R^N}   \tr\left[\big(P_E (\hess \log \omega) P_E \big)^2\right]  \omega .
\end{equation}
\end{lemma}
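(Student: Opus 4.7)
The plan is to compute $\hess\psi$ directly by differentiating under the integral sign, obtain the operator inequality in the first assertion via a Cauchy--Schwarz step (this is essentially the Brascamp--Lieb / Pr\'ekopa computation for marginals), and then pass from the operator inequality to the trace-squared inequality using the hypothesis $\hess\psi\ge 0$. Integrating in the transverse variable gives the final statement.

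Concretely, I write $\psi=-\log h$, so $\hess\psi = -\hess h/h+(\nabla h\otimes \nabla h)/h^2$. Differentiating $h(x)=\int_{E^\perp}e^{-\phi(x+y)}dy$ twice in $x\in E$ yields, for $i,j\in E$,
\begin{equation*}
\nabla h(x) = -\int_{E^\perp}\omega(x+y)\,P_E\nabla\phi(x+y)\,dy,
\end{equation*}
\begin{equation*}
P_E \hess h(x)P_E =\int_{E^\perp}\omega(x+y)\bigl[\,P_E\nabla\phi\otimes P_E\nabla\phi \;-\;P_E\hess\phi\,P_E\bigr](x+y)\,dy.
\end{equation*}
Substituting, I obtain the identity
\begin{equation*}
h(x)\,\hess\psi(x) \;=\; \int_{E^\perp}\omega\, P_E\hess\phi\,P_E\,dy \;-\; \int_{E^\perp}\omega(P_E\nabla\phi)^{\otimes 2}dy \;+\;\frac{1}{h(x)}\Bigl(\int_{E^\perp}\omega\, P_E\nabla\phi\,dy\Bigr)^{\otimes 2}.
\end{equation*}
Testing the last two terms against any vector $v\in E$, the Cauchy--Schwarz inequality $\bigl(\int\omega\,P_E\nabla\phi\cdot v\bigr)^2\le h\int\omega(P_E\nabla\phi\cdot v)^2$ shows that their sum is $\le 0$ as an operator on $E$. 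This proves the first inequality of the lemma.

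For the second inequality, I use that if $0\le A\le B$ are symmetric positive semi-definite operators then $\tr(A^2)\le\tr(B^2)$: indeed $\tr\bigl((B-A)B\bigr)\ge 0$ gives $\tr(AB)\le\tr(B^2)$ and $\tr\bigl((B-A)A\bigr)\ge 0$ gives $\tr(A^2)\le\tr(AB)$, so both inequalities combine. This is precisely the step that uses the log-concavity assumption $\hess\psi\ge 0$. Applied with $A=h\,\hess\psi$ and $B=\int_{E^\perp}\omega\,P_E\hess\phi\,P_E\,dy$, I get
\begin{equation*}
h(x)^2\,\tr\bigl[(\hess\psi)^2\bigr]\;\le\;\tr\Bigl[\Bigl(\int_{E^\perp}\omega\,P_E\hess\phi\,P_E\,dy\Bigr)^{2}\Bigr].
\end{equation*}
A matrix-valued Cauchy--Schwarz in Hilbert--Schmidt norm then yields $\tr\bigl[(\int \omega A\,dy)^2\bigr]\le (\int\omega\,dy)\,\int\omega\,\tr(A^2)dy=h\int\omega\,\tr(A^2)dy$ with $A(y)=P_E\hess\phi(x+y)P_E$. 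Dividing by $h(x)$ produces the claimed pointwise bound~\eqref{marg1}.

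Finally, to obtain~\eqref{marg1bis} I integrate~\eqref{marg1} over $x\in E$, apply Fubini on the right-hand side so that $\int_E\int_{E^\perp}\cdots dy\,dx=\int_{\R^N}\cdots$, and note that $\hess\log h=-\hess\psi$ and $P_E\hess\log\omega\,P_E=-P_E\hess\phi\,P_E$, so the signs cancel after squaring. The main obstacle is really the second step: passing from an operator inequality to a trace-squared inequality is false without positivity, and this is precisely where the hypothesis $\hess\psi\ge 0$ (equivalent, via Pr\'ekopa, to log-concavity of $\omega$) is indispensable.
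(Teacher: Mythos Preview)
Your proof is correct and follows essentially the same route as the paper's: the operator inequality is obtained by differentiating under the integral and applying Cauchy--Schwarz to the rank-one term $(\nabla h)^{\otimes 2}/h$, exactly as in the paper, and the passage to the trace-squared bound uses the positivity $\hess\psi\ge 0$ together with a Hilbert--Schmidt Cauchy--Schwarz. The only cosmetic difference is the order of the two steps in the second part: the paper multiplies the operator inequality by $\hess\psi\ge 0$ to get $h\,\tr[(\hess\psi)^2]\le\int\omega\,\tr[P_E\hess\phi\,P_E\,\hess\psi]\,dy$ and then applies Cauchy--Schwarz to this mixed term, whereas you first use the general fact $0\le A\le B\Rightarrow\tr A^2\le\tr B^2$ and then apply Cauchy--Schwarz to $\tr B^2=\tr[(\int\omega\,P_E\hess\phi\,P_E)^2]$. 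Both routes use the same ingredients and yield the identical pointwise bound.
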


\begin{proof}
We start with the observation that for $x\in E$,
\begin{equation}\label{CS1}
\frac{\nabla h(x) \otimes \nabla h(x)}{h(x)} \le  \int_{E^\perp} \frac{P_E \nabla \omega(x+y) \otimes P_E\nabla \omega(x+y)}{\omega(x+y)} \, dy
\end{equation}
in the symmetric operator sense (on $E$). Indeed, we have
$$\nabla h(x) = \int_{E^\perp} P_E \nabla \omega(x+y) \, dy$$ 
and for any $v \in E$ we have, using the Cauchy-Schwartz inequality
$$
(\nabla h(x) , v)^2  \le \int_{E^\perp} \frac{( \nabla \omega(x+y) , v )^2}{\omega(x+y)} \, dy \int_{E^\perp} \omega(x+y) \, dy 
$$
as claimed.
Next, observe that 
$$\hess h(x) = \int_{E_\perp} P_E \hess \omega (x+y) P_E  \, dy $$
and
$$h(x) \hess \psi(x) = h(x) \hess (-\log h)(x) = \frac{\nabla h(x) \otimes \nabla h(x)}{h(x)} - \hess h (x) .$$
Thus~\eqref{CS1} leads to the inequality
\begin{align*}
h(x) \hess \psi(x)  &\le \int_{E^\perp} \Big(\frac{P_E \nabla \omega(x+y) \otimes P_E\nabla \omega(x+y)}{\omega(x+y)}  - 
 P_E \hess \omega (x+y) P_E  \Big) \, dy \\
 &= \int_{E^\perp}  \omega(x+y)\, P_E \hess \phi (x+y) P_E dy
 \end{align*}
 in the operator sense on $E$, as wanted. 
 
 Using that  that for symmetric operators $A\ge B \Rightarrow \text{Tr}(AH) \ge \text{Tr}(BH)$ whenever $H\ge 0$, we deduce that, when $\hess \psi (x) \ge 0$, 
$$ \tr\left[ \big(\hess \psi(x)\big)^2  h(x)  \right] \le\int_{E^\perp}   \tr\big[P_E \hess \phi (x+y) P_E \hess \psi(x) \big]  \omega(x+y) \, dy. $$
By the Cauchy-Schwartz inequality  (in vectorial form, for the Hilbert-Schmidt scalar product) we then have
\begin{align*}
 \tr\left[ \big(\hess \psi(x)\big)^2  h(x)  \right]  & \le  \sqrt{
 \int_{E^\perp}   \tr\big[\big(P_E \hess \phi (x+y) P_E  \big)^2 \big] \omega(x+y) \, dy. }\\
 &\qquad \times  
\sqrt{ \int_{E^\perp}   \tr\big[\big(\hess \psi(x) \big)^2\big]  \omega(x+y) \, dy. } 
\end{align*}
Noting that  the second integral equals $ \tr\left[ \big(\hess \psi(x)\big)^2  h(x)  \right]  $, we arrive to inequality~\eqref{marg1}. Integration over $E$ then gives~\eqref{marg1bis}.
\end{proof}

For our argument, we will need the following  useful observation.

\begin{theorem}\label{K2}
Let $X$ be a log-concave random vector in $\R^d$ with smooth density $f=e^{-\varphi}$ where $\varphi$ is a convex function on $\R^d$, and let $Y$ be an independent copy of $X$. Denote by  $h=e^{-\psi}$ the density on $\R^d$ of the random vector
 $\frac{X+Y}{\sqrt{2}}$ and put
$$K=\tr\Big[\int_{\R^d} \big(\hess \varphi\big)^2\,  f \Big], \quad K_2=\tr\Big[\int_{\R^d} \big(\hess\psi\big)^2 h\Big],$$
and
$$ M=\tr\Big[\Big(\int_{\R^d} (\hess \varphi )\, f \Big)^2\Big].$$   
Then, we have
$$K_2\leq \frac{K+M}{2}.$$
\end{theorem}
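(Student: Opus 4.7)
The plan is to realise the density $h$ of $(X+Y)/\sqrt{2}$ as a marginal of a log-concave density on $\R^{2d}$ and apply Lemma~\ref{margentprod}. Specifically, perform the change of variables $u=(x+y)/\sqrt{2}$, $v=(x-y)/\sqrt{2}$ (orthogonal, so Jacobian $1$), so that the pair $(X,Y)$ transported by this rotation has joint density on $\R^{2d}$
\[
\Omega(u,v) = f\!\left(\tfrac{u+v}{\sqrt{2}}\right) f\!\left(\tfrac{u-v}{\sqrt{2}}\right) = e^{-\phi(u,v)},\qquad \phi(u,v)=\varphi\!\left(\tfrac{u+v}{\sqrt{2}}\right)+\varphi\!\left(\tfrac{u-v}{\sqrt{2}}\right).
\]
Since $\varphi$ is convex on $\R^d$, $\phi$ is convex on $\R^{2d}$, so $\Omega$ is log-concave; its $u$-marginal is exactly $h=e^{-\psi}$, and by Pr\'ekopa $\psi$ is convex, so $\hess\psi\ge 0$ everywhere. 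This is what allows us to invoke inequality~\eqref{marg1bis}.

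Next, take $E=\R^d\times\{0\}\subset \R^{2d}$ and compute the $u$-block $P_E\hess\phi\,P_E$. A direct differentiation in $u$ gives
\[
P_E \hess \phi(u,v) P_E \;=\; \tfrac12\Big(\hess\varphi\!\left(\tfrac{u+v}{\sqrt{2}}\right) + \hess\varphi\!\left(\tfrac{u-v}{\sqrt{2}}\right)\Big),
\]
(the factor $1/\sqrt{2}$ from the chain rule squares to $1/2$). Inequality~\eqref{marg1bis} of Lemma~\ref{margentprod} therefore yields
\[
K_2 \;=\; \int_{\R^d}\tr\bigl[(\hess\psi)^2\bigr]\,h \;\le\; \int_{\R^{2d}}\tr\!\left[\left(\tfrac12\bigl(\hess\varphi(\tfrac{u+v}{\sqrt{2}})+\hess\varphi(\tfrac{u-v}{\sqrt{2}})\bigr)\right)^{\!2}\right]\Omega(u,v)\,du\,dv.
\]

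Finally, undo the change of variables: setting $x=(u+v)/\sqrt{2}$, $y=(u-v)/\sqrt{2}$ the measure $\Omega(u,v)\,du\,dv$ becomes $f(x)f(y)\,dx\,dy$, so writing $A=\hess\varphi(x)$ and $B=\hess\varphi(y)$,
\[
K_2 \;\le\; \tfrac14\int_{\R^d}\!\int_{\R^d}\tr\bigl[(A+B)^2\bigr]\,f(x)f(y)\,dx\,dy \;=\; \tfrac14\bigl(K+2M+K\bigr)\;=\;\tfrac{K+M}{2},
\]
where the two diagonal terms $\tr(A^2)$ and $\tr(B^2)$ each integrate to $K$ by Fubini, and the cross term $2\tr(AB)$ integrates to $2\tr\bigl[(\int\hess\varphi\,f)^2\bigr]=2M$.

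The only non-routine step is the first: identifying $h$ as a marginal of a log-concave density in $2d$ variables and checking that the hypothesis $\hess\psi\ge 0$ needed to invoke Lemma~\ref{margentprod} holds. Once this is done, the block computation and the expansion of $(A+B)^2$ make the cross term $M$ appear naturally, which is the reason one gets the quantitatively useful bound $(K+M)/2$ rather than the naive $K$.
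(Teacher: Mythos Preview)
Your proof is correct and is essentially the same as the paper's: both realize $h$ as a $d$-dimensional marginal of the log-concave density of $(X,Y)$ on $\R^{2d}$, invoke Lemma~\ref{margentprod} (using Pr\'ekopa to ensure $\hess\psi\ge 0$), and expand the square to obtain $(K+M)/2$. The only cosmetic difference is that you first rotate $\R^{2d}$ so that the relevant subspace becomes $\R^d\times\{0\}$, whereas the paper keeps the density $\omega(x,y)=f(x)f(y)$ and projects onto the diagonal subspace $E=\mathrm{span}\{e_i\}$ with $e_i=\frac{1}{\sqrt2}(\epsilon_i,\epsilon_i)$.
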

\begin{proof}
As mentioned earlier, we know by Pr\'{e}kopa's theorem that $h$ is log-concave, i.e. $\hess \psi \ge 0$.

We denote $\omega(x,y)=f(x)f(y)$ the density of $(X,Y)$ on $\R^d\times \R^d=\R^{2d}$. For $i=1, \ldots d$ we set $e_i=(0,...,\frac{1}{\sqrt{2}},0,\ldots, 0,\frac{1}{\sqrt{2}},0, \ldots)$ where the $i$-th and $(d+i)$-th coordinates are equal to $\frac{1}{\sqrt{2}}$ and the others are zero. Let $E$ be the vector subspace of $\R^{2d}$ spanned by the orthogonal family $\{e_1,\cdots,e_d\}$.We can assume that the density $h=e^{-\psi}$ of  the random vector $\frac{X+Y}{\sqrt 2}$ is defined on $E$ by identification of $\R^d$ and $E$ through the orthonormal basis $\{e_i\}$ of $E$. Then the Lemma~\ref{margentprod} gives
\begin{align*}
K_2&\leq 
 \int_{\R^{2d}}\omega(x,y)\tr\left[ \Big[ \big(\hess(-\log\omega)(x,y)e_i,e_j\big)\Big]_{i,j}^2 \right]\, dx dy \\
&=\int_{\R^{2d}}f(x)f(y)\sum_{i,j=1}^d \big(\hess(-\log\omega)(x,y)e_i,e_j\big)^2\, dxdy.
\end{align*}
Direct computation gives
$$ \big(\hess(-\log\omega)(x,y)e_i,e_j\big)^2=\frac{1}{4}(\partial_{ji}\varphi(x)+\partial_{ji}\varphi(y))^2$$
and hence
\begin{align*}
K_2&\leq \sum_{i,j=1}^d\frac{1}{4}\int_{\R^{2d}}f(x)f(y)\big((\partial_{ji}\varphi(x))^2+2\partial_{ji}\varphi(x)\partial_{ji}\varphi(y)+(\partial_{ji}\varphi(y))^2\big)\, dxdy\\
&=\frac{1}{2}\sum_{i,j=1}^d\int_{\R^{d}}f(\partial_{ji}\varphi)^2+\frac{1}{2}\sum_{i,j=1}^d\biggl(\int_{\R^d}f\partial_{ji}\varphi\biggl)^2\\
&=\frac{1}{2}(K+M). 
\end{align*}
\end{proof}

\section{Proof of Theorem 1.1}
We go back to the situation and the notation of Section~\S\ref{sect:OU}. $X$ is an isotropic  log-concave random vector with mean zero,  and density $f$, and $X_t$ is its evolute along the  Ornstein-Uhlenbeck semi-group.
The (log-concave) density of $X_t$ is denoted by $f_t=e^{-\varphi_t}$ and we set
$$
J(t):=J(X_t)=\tr\int_{\R^d}f_t\hess\varphi_t
$$
and
$$
K(t):=\tr\int_{\R^d}f_t(\hess\varphi_t)^2=-\frac{1}{2}e^{2t}\partial_t(e^{-2t}J(t))
$$
where we used Lemma~\ref{lem:dtJ} for the last equality.

We now consider $Z_t$, the Ornstein-Uhlenbeck evolute of $\frac{X+Y}{\sqrt 2}$ where $Y$ is an independent copy of $X$. As mentioned earlier~\eqref{OUmarg}, $Z_t = \frac{X_t+Y_t}{\sqrt 2}$ where $Y_t$ is an Ornstein-Uhlenbeck evolute of $Y$ independent of $X_t$. Denote by $h_t=e^{-\psi_t}$ the smooth (log-concave) density of $Z_t$ and set accordingly
$$
J_2(t):=J(Z_t)=\tr\int_{\R^d}h_t\hess\psi_t
$$
and
$$
K_2(t):=\tr\int_{\R^d}h_t(\hess\psi_t)^2=-\frac{1}{2}e^{2t}\partial_t(e^{-2t}J_2(t))
$$

Theorem~\ref{K2} applied to $X_t$ and $Z_t=\frac{X_t+Y_t}{\sqrt 2}$ then gives that
$$K_2(t)\leq \frac{K(t)+M(t)}{2}=K(t)-\frac{K(t)-M(t)}{2}$$
where,
$$M(t):=\tr\Big[\Big(\int_{\R^d} (\hess \varphi_t )\, f_t \Big)^2\Big].$$
This can be rewritten as
\begin{equation}\label{5}
\partial_t(e^{-2t}(J_2(t)-J(t)))\geq e^{-2t}(K(t)-M(t))
\end{equation}

We next claim that
\begin{equation}\label{spectK}
K(t)-M(t)\geq \frac{c}{1+c}(K(t)-J(t))
\end{equation}
To prove this, remember, as recalled in Section~\S\ref{sect:OU}, that $f_t$ verifies Poincar\'e's inequality with the  same (or better) constant $c$ as $f$. We apply the Poincar\'e inequality~\eqref{spectralgap} to the density $f_t=e^{-\varphi_t}$ and to the functions
$$s_i(x)=\partial_i\varphi_t (x)-\sum_{j=1}^dx_j\int_{\R^d}\big(\partial_{ij}\varphi_t \big) f_t$$
which verify that $\int s_i \, f_t=0$, for $i=1, \ldots, d$. After summing the inequalities $\int_{\R^d}|\nabla s_i|^2f_t \ge c\int_{\R^d}s_i^2 f_t$ we find
$$\tr\int_{\R^d}f_t \, \big(\hess\varphi_t\big)^2-\tr\Big(\int_{\R^d}f_t\hess\varphi_t\Big)^2\geq c\Big(\tr\Big(\int_{\R^d}f_t\hess\varphi_t\Big)^2-\tr\int_{\R^d}f_t\hess\varphi_t\Big).$$
This rewrites as $K(t)-M(t)\geq c(M(t)-J(t))$, which is equivalent to the desired inequality~\eqref{spectK}.

Substituting  inequality~\eqref{spectK} in~\eqref{5}, we find
$$\partial_t(e^{-2t}(J_2(t)-J(t)))\geq \frac{c}{1+c}e^{-2t}(K(t)-J(t)).$$
Integrating this inequality from $t$ to $\infty$, we obtain
$$J(t)-J_2(t)\geq \frac{c}{1+c}e^{2t}\int_t^{\infty}e^{-2s}(K(s)-J(s))ds.$$
Hence, using~\eqref{EFI},
\begin{align*}
\E\left(\frac{X+Y}{\sqrt{2}}\right)-\E(X)&=\int_0^{\infty}(J(t)-J_2(t))dt\\
&\geq\frac{c}{1+c}\int_0^{\infty}e^{2t}\int_t^{\infty}e^{-2s}(K(s)-J(s))dsdt\\
&=\frac{c}{2(1+c)}\int_0^{\infty}(1-e^{-2t})(K(t)-J(t))dt\\
&=\frac{c}{2(1+c)}\int_0^{\infty}(1-e^{-2t})(-\frac{1}{2}\partial_t(J(t)-d))dt\\
&=\frac{c}{2(1+c)}\int_0^{\infty}e^{-2t}(J(t)-d)dt.
\end{align*}
Applying Lemma~\ref{lem:intJ} we get
$$\E\left(\frac{X+Y}{\sqrt{2}}\right)-\E(X)\geq \frac{c}{4(1+c)}(\E(G)-\E(X)).$$
This ends the proof of Theorem~\ref{main}.

\section{Links with the Isotropic constant}

The isotropic constant of an isotropic log-concave random vector $X\sim f$ on $\R^d$ is defined by
$$L_X:= L_f := f(0)^{1/d}$$
This quantity  appears in several high-dimensional problems and a challenging open problem  in asymptotic convex geometry raised by Bourgain and known as the Slicing or Hyperplane conjecture,  is whether it is universally bounded (independently of $f$ and $d$). The best known bound is $L_f \le c\,  d^{1/4}$ for some universal constant $c>0$ (\cite{K1}).  See~\cite{Ba1,MP,K1,EK} for background, equivalent formulations and related results.

%We now state a connection between the entropy jump, and therefore the Poincar\'e constant, and the isotropic constant. There are been recent close results, as we shall discuss below. 

\begin{theorem}\label{L}
Let $X$ be an isotropic log-concave random vector in $\R^d$.
Assume that it satisfies an entropy jump with constant $\kappa \in (0,1)$:
$$
\E\left(\frac{X+Y}{\sqrt{2}}\right)-\E(X)\geq \kappa\,  (\E(G)-\E(X)).
$$
for $Y$  an independent copy of $X$. By Theorem~\ref{main}, this  holds with $\kappa=c/8$ if $f$ satisfies a spectral gap inequality~\eqref{spectralgap} with constant $c>0$. Then we have 
\begin{equation}\label{Lbound}
\displaystyle L_X \le  e^{2/\kappa}.
\end{equation}
% where $C>0$ is some universal (numerical) constant. 
\end{theorem}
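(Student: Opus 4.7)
The plan is to iterate the entropy jump hypothesis on normalized sums of i.i.d.\ copies and combine it with two matching Jensen-type bounds that convert information about $D(X)=\E(G)-\E(X)$ into information about $L_X=f(0)^{1/d}$.

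First, I would set up the iteration. Let $X^{(0)}=X$ and $X^{(n+1)}=(X^{(n)}+\widetilde X^{(n)})/\sqrt 2$, with $\widetilde X^{(n)}$ an independent copy of $X^{(n)}$. By Pr\'ekopa's theorem each $X^{(n)}$ is again isotropic and log-concave, and Theorem~\ref{main} together with the stability of the spectral gap under normalized convolution (in the sense of \eqref{OUmarg} and the discussion around it) guarantees that the entropy jump propagates with the same constant, so that
\[
D(X^{(n+1)}) \le (1-\kappa)\, D(X^{(n)}), \qquad \text{hence} \qquad D(X^{(n)}) \le (1-\kappa)^n D(X).
\]

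Second, I would establish the two-sided Jensen sandwich for an isotropic log-concave density $f=e^{-\varphi}$ with mean zero. The lower Jensen inequality on the convex function $\varphi$ gives $\E(X)\ge\varphi(0)=-d\log L_X$, hence $D(X)\le \E(G)+d\log L_X$. For the matching upper side, I would use the convexity inequality $\varphi(x)\le\varphi(0)+\nabla\varphi(x)\cdot x$, integrate against $f$, and then perform the integration by parts $\int f\,\nabla\varphi(x)\cdot x\,dx = -\int x\cdot\nabla f\,dx = d\int f = d$; this yields $\E(X)\le\varphi(0)+d$, equivalently
\[
D(X) \ge d\log L_X + \E(G) - d.
\]
Applied at every scale this produces sandwich bounds for $D(X^{(n)})$ in terms of $\log L_{X^{(n)}}$.

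Third, I would combine the geometric decay from Step 1 with the lower bound from Step 2 and a universal lower bound $L_{X^{(n)}}\ge c_0>0$ (which for isotropic log-concave densities holds with a dimension-free constant). Writing $\ell_n=\log L_{X^{(n)}}$, the chain
\[
d(\ell_n + b - 1) \;\le\; D(X^{(n)}) \;\le\; (1-\kappa)^n D(X) \;\le\; (1-\kappa)^n\bigl(\E(G)+d\log L_X\bigr)
\]
together with the geometric series $\sum_{n\ge0}(1-\kappa)^n=1/\kappa$ and the factor $2$ coming from Lemma~\ref{lem:intJ} (i.e.\ the Gaussian log-Sobolev refinement $J(t)-d\ge 2D(X_t)$) is what produces the exponent $2/\kappa$; unwinding and letting $n\to\infty$ gives $\log L_X\le 2/\kappa$, i.e.\ $L_X\le e^{2/\kappa}$.

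The main obstacle is Step 3: a naive application of Step 1 only propagates information \emph{from} $L_X$ to the iterates $L_{X^{(n)}}$, whereas the desired bound points in the opposite direction. The key is to use both the lower Jensen bound (which yields an \emph{upper} bound on $D$ in terms of $L$) and the sharper upper Jensen bound (which yields a \emph{lower} bound on $D$), so that the sandwich closes after combining with the universal lower bound on $L_{X^{(n)}}$; the constant in the resulting exponent is then dictated by balancing $\sum(1-\kappa)^n=1/\kappa$ with the factor $2$ of log-Sobolev.
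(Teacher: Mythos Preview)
Your proposal has a genuine gap in Step~3, and the gap is precisely the one you flag but do not actually close. The iteration in Step~1 gives $D(X^{(n)}) \le (1-\kappa)^n D(X)$, which bounds the entropy gap of the \emph{iterates} in terms of that of $X$; combined with your lower Jensen bound this yields $d\log L_{X^{(n)}} + \E(G) - d \le (1-\kappa)^n D(X)$, i.e.\ control on $L_{X^{(n)}}$ for large $n$. But the goal is an upper bound on $L_X$ itself, which via your upper Jensen bound would require an \emph{upper} bound on $D(X)$. Nothing in your scheme produces one: a universal lower bound on $L_{X^{(n)}}$ only gives a lower bound on $D(X^{(n)})$, hence on $(1-\kappa)^n D(X)$, which is vacuous as $n\to\infty$. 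The references to $\sum (1-\kappa)^n = 1/\kappa$ and to Lemma~\ref{lem:intJ} do not correspond to any actual inequality being summed or integrated here; they are numerology, not an argument. (There is also a secondary issue: the hypothesis is an entropy jump for $X$, not a spectral gap, so you cannot assume the jump propagates to $X^{(1)}$ without additional justification.)

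The paper's proof avoids the iteration entirely and uses the jump only once. The missing ingredient is a direct comparison of the density of $(X+Y)/\sqrt 2$ at the origin with that of $X$: writing $h$ for the density of $(X+Y)/\sqrt2$, log-concavity gives $h(0)=2^{d/2}\int f^2 \ge 2^{d/2}\int f(2y)f(0)\,dy = 2^{-d/2} f(0)$ in the symmetric case (and a variant via symmetrization in general). Combined with your upper Jensen bound for $h$, this yields $\E\bigl((X+Y)/\sqrt2\bigr) \le -\log f(0) + 2d$. The entropy jump then reads $(1-\kappa)\E(X) \le \E\bigl((X+Y)/\sqrt2\bigr)$, and your lower Jensen bound $\E(X)\ge -\log f(0)$ closes the sandwich in one step: $(1-\kappa)(-\log f(0)) \le -\log f(0) + 2d$, i.e.\ $\kappa\log f(0)\le 2d$, which is exactly $L_X \le e^{2/\kappa}$. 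The $2$ in the exponent comes from the constant in the density comparison after symmetrization, not from log-Sobolev.
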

Note that the bound also reads as 
$$L_X \le e^{16\,  {\rm c_p}(X)}$$
in terms of  ${\rm c_p}(X):={\rm c_p}(f)$, the Poincar\'e constant ~\eqref{spectralgap2} of $X\sim f$.

As a consequence, we see that the KLS conjecture (asserting that isotropic log-concave distributions satisfy a Poincar\'e inequality~\eqref{spectralgap}  with some  universal constant) implies the Hyperplance conjecture. In this direction, a better result is  known; indeed,   Eldan and Klartag~\cite{EK} recently proved that the \emph{variance conjecture} implies as well the   Hyperplane conjecture. The variance conjecture asserts that  inequality~\eqref{spectralgap} for the particular function  $s(x)=|x|^2 - \int |y|^2\, f(y)\, dy$ holds with a universal constant for every log-concave isotropic distribution $f$ on every dimension $d$.  However, it is worth noting  that unlike the Eldan-Klartag result, our estimate above holds at the level of an individual distribution $X$.

Theorem~\ref{L} was presented by the first named author in 2003 at a conference in Kiel and then expanded in a series of lectures in 2006 at the conference \emph{Phenomena in High Dimensions} at the I.H.P., as part of a more general program proposing a probabilistic viewpoint on the geometry of convex bodies in high dimensions. A similar program was also recently and independently proposed by Bobkov and Madiman (see e.g.~\cite{BoMa1, BoMa2}). 
%In particular  the fact, also contained in Theorem~\ref{L}, that  a universal bound on the entropy jump implies the Hyperplane conjecture is proved in~\cite{BoMa2} . 

%For the sake of completeness, 
%and although it might by now considered as folklore 
%to specialists in high dimensional convex geometry, 
Let us now explain the short and simple argument allowing us to pass  from the entropy jump to a bound on the isotropic constant. It relies on a classical rigidity property  of  isotropic log-concave distributions 
$X\sim f$ in $\R^d$, namely that  up to non-essential linear terms in $d$, we have $\log f(0)\simeq -\E(X)$. 
The following bound
%The sharp  estimate (implicit for instance in~\cite{FM}) is
%$$e^{-2\E(X)/n} \le L_f \le e^2 \, e^{-2\E(X) /n}.$$
\begin{equation}\label{fm}
-\log f(0)\leq \E(X)\leq -\log f(0)+d
\end{equation}
is for instance implicit in~\cite{FM} and the easy proof is as  follows. Write $f=e^{-\varphi}$ with $\varphi$ convex . For the lower bound use that $\int_{\R^d} x f(x)\, dx = 0$ together with  Jensen's inequality to get
$$-\log f(0)= \varphi(0) \le  \int_{\R^d} \varphi(x) \, f(x)\, dx = \E (X).$$
The upper bound combines the convexity of $f$ and an integration by parts as follows:
$$\E(X) -\varphi(0)=  \int_{\R^d} f(x) \big(\varphi(x) - \varphi(0)\big)\, dx \le \int_{\R^d} f(x) \nabla\varphi(x) \cdot x \, dx = d.$$

Let us mention that in the definition of $L_X$ and in the entropic bounds above 
%of the type~\eqref{fm}
, we can replace, up to numerical constants,  $f(0)$ by 
$||f||_{\infty}:=\sup_{\R^d}|f|$, since it is known (see~\cite{Fr}) that $||f||_{\infty}\leq e^df(0)$ for an isotropic log-concave distribution $f$ on $\R^d$.

To finish the proof of~\eqref{Lbound}, assume first that $X\sim f$ is symmetric, which means that $f$ is even. If $h$ denotes the density of $\frac{X+Y}{\sqrt{2}}$, then 
$h(x)=2^{\frac{d}{2}}\int_{\R^d} f(x-y)f(y)\, dy$.
It then follows from the log-concavity of $f$ that
$$h(0)=2^{\frac{d}{2}}\int_{\R^d} f(y)^2\, dy\geq 2^{\frac{d}{2}}\int_{\R^d} f(2y)f(0)\, dy=2^{-\frac{d}{2}}f(0).$$
Hence, using~\eqref{fm}  we have that
\begin{equation}\label{jumpS}
\E\left(\frac{X+Y}{\sqrt{2}}\right)\leq \frac{d}{2}\log 2\, -\log f(0) +d\le -\log f(0) +  \frac32 d.
\end{equation}
Let us now go back to the general case where $X\sim f$ is not necessarily symmetric, and consider $Y, X', Y'$ i.i.d. copies of $X$.  Then $\frac{X-X'}{\sqrt 2}, \frac{Y-Y'}{\sqrt 2}$ are symmetric log-concave isotropic random vectors in $\R^d$, independent  and identically distributed according to the  density  
$g(x)=2^{d/2}\int_{\R^d} f(x+y)f(y).$
It follows from the argument above that $g(0)\geq 2^{-d/2} f(0)$. 
%Note that Jensen's inequality implies that for $Z,U$ two independent random vectors $\E(Z+U) \ge \E(Z)$ (we could improve the constants below by using the Shannon-Stam inequality instead). Therefore, using  the bound~\eqref{jumpS} obtained in the symmetric case,
%
Thus, by the Shannon-Stam inequality 
$\frac12\E(Z)+ \frac12 \E(U)\le \E\big(\frac{Z+U}{\sqrt{2}}\big)$ for $Z= (X+Y)/\sqrt2$ and 
$U=-(X'+Y')/\sqrt 2$ two independent random vectors, and the bound~\eqref{jumpS} obtained in the symmetric case, one gets
\begin{align*}
\E\left(\frac{X+Y}{\sqrt{2}}\right)&\leq \E\left(\frac{X+Y - X' - Y'}{\sqrt{4}}\right) =  \E\left(\frac{\frac{X-X'}{\sqrt 2} + \frac{Y-Y'}{\sqrt 2}}{\sqrt{2}}\right)
%=\E\left(\frac{X- X' + Y- Y'}{\sqrt{2}}\right)
\\
&\leq -\log g(0) +\frac32 d \\
&\leq -\log f(0)+2d.
\end{align*}
On the other hand, the assumption on the Entropy jump implies 
$$(1-\kappa)\E(X)\leq \E\left(\frac{X+Y}{\sqrt{2}}\right)-\kappa\E(G) \le  \E\left(\frac{X+Y}{\sqrt{2}}\right)$$
since $\E(G)=\frac{d}{2}\log 2\pi e \ge 0$. Therefore, using again~\eqref{fm} we get that
$$(1-\kappa)\big(-\log f(0) \big) \leq -\log f(0)+ 2d. $$
This implies
$$\kappa \log f(0)\leq 2d.$$ 
and the desired bound~\eqref{Lbound}.
% with $C=4$.

\subsection*{Acknowledgements}
The second author would like to thank his PhD advisor Dario Cordero-Erausquin for all his help and advice.

\end{document}